\documentclass[a4paper,12pt,reqno]{amsart}
\usepackage{a4wide}
\usepackage{amsmath}
\usepackage{amssymb}
\usepackage{amsthm}
\usepackage{latexsym}
\usepackage{graphicx}
\usepackage[english]{babel}
              
\newtheorem{prop}[subsection]{Proposition}

\newtheorem{teor}[subsection]{Theorem}
\newtheorem{lema}[subsection]{Lemma}
\newtheorem{cor} [subsection]{Corollary}
\theoremstyle{definition}
\newtheorem{dfn} [subsection]{Definition}
\theoremstyle{remark}

\newtheorem{exm} [subsection]{Example}

\def\sdepth{\operatorname{sdepth}}
\def\qdepth{\operatorname{hdepth}}
\def\hdepth{\operatorname{hdepth}}
\def\depth{\operatorname{depth}}
\def\supp{\operatorname{supp}}

\def\PP{\operatorname{P}}
\def\Ann{\operatorname{Ann}}

\selectlanguage{english}

\numberwithin{equation}{section}

\begin{document}

\title{Remarks on the Hilbert depth of squarefree monomial ideals}

\author[Silviu B\u al\u anescu and Mircea Cimpoea\c s]{Silviu B\u al\u anescu$^1$ and Mircea Cimpoea\c s$^2$}

\date{}

 \keywords{Stanley depth, Hilbert depth, Depth, Simplicial complex, Squarefree monomial ideal.}

 \subjclass[2020]{05A18, 06A07, 13C15, 13P10, 13F20}

 \footnotetext[1]{ \emph{Silviu B\u al\u anescu}, National University of Science and Technology Politehnica Bucharest, Faculty of
 Applied Sciences, 
 Bucharest, 060042, E-mail: silviu.balanescu@stud.fsa.upb.ro}
\footnotetext[2]{ \emph{Mircea Cimpoea\c s}, National University of Science and Technology University Politehnica Bucharest, Faculty of
Applied Sciences, 
Bucharest, 060042, Romania and Simion Stoilow Institute of Mathematics, Research unit 5, P.O.Box 1-764,
Bucharest 014700, Romania, E-mail: mircea.cimpoeas@upb.ro, mircea.cimpoeas@imar.ro}

\begin{abstract}
Let $K$ be a infinite field, $S=K[x_1,\ldots,x_n]$ and $0\subset I\subsetneq J\subset S$ two monomial ideals. 
In \cite{lucrare2} we proved a new formula for the Hilbert depth of $J/I$. In this paper, we illustrate how one can
use the Stanley-Reisner correspondence between (relative) simplicial complexes and (quotients of) squarefree monomial ideals,
in order to reobtain some basic properties of the Hilbert depth. More precisely, we show that 
$\depth(J/I)\leq \qdepth(J/I)\leq \dim(J/I)$. 
Also, we prove that $\qdepth(I)\geq \qdepth(S/I)+1$, if $S/I$ is Cohen-Macaulay.
\end{abstract}

\maketitle

\section{Introduction}

Let $K$ be an infinite field and $S=K[x_1,\ldots,x_n]$, the polynomial ring over $K$ in $n$ variables.
Let $0\subset I\subsetneq J\subset S$ be two monomial ideals. 
A \emph{Stanley decomposition} of $J/I$ is a decomposition of $J/I$ as a direct sum of $K$-vector spaces 
$\mathcal D: J/I = \bigoplus_{i=1}^r m_i K[Z_i]$, where $m_i\in S$ are monomials and $Z_i\subset\{x_1,\ldots,x_n\}$.
We define $\sdepth(\mathcal D)=\min_{i=1,\ldots,r} |Z_i|$ and
$$\sdepth(J/I)=\max\{\sdepth(\mathcal D):\mathcal D\text{ is 
  a Stanley decomposition of }J/I\}.$$
The number $\sdepth(J/I)$ is called the \emph{Stanley depth} of $J/I$. 
Apel \cite{apel} reformulated a conjecture first posed by Stanley in \cite{stan}, stating that
$$\sdepth(J/I)\geq \depth(J/I)$$ for any monomial ideals $0\subset I\subsetneq J\subset S$.
Duval et al. \cite{duval} disproved this conjecture for $J/I$ with $I\neq 0$. However, the problem $\sdepth(I)\geq\depth(I)$,
for all monomial ideals $I\subset S$, remains open.
 

Let $0\subset I\subsetneq J\subset S$ be two squarefree monomial ideals. We consider the poset
$$P_{J/I}=\{A\subset [n]\;:\;x_A=\prod_{j\in A}x_j \in J\setminus I\} \subset 2^{[n]},$$ 
where $[n]=\{1,2,\ldots,n\}$. 

For two subsets $C\subset D\subset [n]$, we denote $[C,D]:=\{A\subset [n]\;:\;C\subset A\subset D\}$,
and we call it the interval bounded by $C$ and $D$. A partition with intervals of $P_{J/I}$ is a decomposition
$\mathcal P:\;P_{J/I}=\bigcup_{i=1}^r [C_i,D_i]$ into disjoint intervals. We let $\sdepth(\mathcal P)=\min_{i=1,\ldots,r}|D_i|$.
Herzog, Vl\u adoiu and Zheng showed in \cite{hvz} that 
$$\sdepth(J/I)=\max\{\sdepth(\mathcal P)\;:\;\mathcal P\text{ a partition with intervals of }P_{J/I}\}.$$
In particular, the Stanley depth of $J/I$ can be computed algorithmically. Rinaldo \cite{rin} implemented an algorithm in 
CoCoA \cite{cocoa} which compute $P_{J/I}$ and, therefore, the Stanley depth of $J/I$.

We consider $S=K[x_1,\ldots,x_n]$ with the standard grading. Let $M$ be a finitely generated graded $S$-module. 
We denote 
$$H_M(t)=\sum_{j\geq 0}(\dim_K M_j)t^j,$$ the Hilbert series of $M$.
Uliczka \cite{uli} introduced a new invariant associated to $M$, called Hilbert depth and denoted $\hdepth(M)$, by
$$\hdepth(M)=\max\{r\;:\;\text{There exists a f.g. graded }S\text{-module }N\text{ with }H_N(t)=H_M(t)\}.$$
He noted that, if $M=J/I$ then $\hdepth(M)\geq \sdepth(M)$. On the other hand, it is clear that $\hdepth(M)\geq \depth(M)$.
Moreover, Uliczka proved that 
\begin{equation}\label{ulita}
\hdepth(M)=\max\{r\;:\;(1-t)^rH_M(t)=\sum_{j\geq 0}a_j\text{ with }a_j\geq 0\text{ for all }j\geq 0\}.
\end{equation}
In particular, it follows that $\hdepth(M)\leq \dim(M)$. See also \cite[Theorem 1.1]{bruns}.

Now, let $0\subset I\subsetneq J\subset S$ be two squarefree monomial ideals. We denote
$$\alpha_j(J/I)=|\{A\in P_{J/I}\;:\;|A|=j\}|,\text{ for }0\leq j\leq n.$$ 
For any $q$ with $0\leq q\leq n$, we define
\begin{equation}\label{betak}
\beta_k^q(J/I)=\sum_{j=0}^k (-1)^{k-j} \binom{d-j}{k-j} \alpha_{j}(J/I).
\end{equation}
Using an inversion formula, see for instance \cite[Equation~(7) on p.~50 with $q=0$]{RiorAA}, we get
\begin{equation}\label{alfak}
\alpha_k(J/I)=\sum_{j=0}^k \binom{d-j}{k-j} \beta^d_{j}(J/I).
\end{equation}
Using the characterization of the Hilbert depth given in \eqref{ulita}, we proved in \cite[Theorem 2.4]{lucrare2} that
$$\qdepth(J/I)=\max\{q\;:\;\beta_k^q(J/I)=\sum_{j=0}^k (-1)^{k-j}\binom{q-j}{k-j}\alpha_j(J/I)\geq 0\text{ for all }0\leq k\leq q\}.$$ 
Also, in \cite{lucrare2} we noted that if $\mathcal P:\;P_{J/I}=\bigcup_{i=1}^r [C_i,D_i]$ is an interval partition with 
$\sdepth(\mathcal P)=q$ then $\beta_k^q(J/I)\geq 0$ for all $0\leq k\leq q$. Thus, in particular, we reobtain the fact that
\begin{equation}\label{qms}
\qdepth(J/I)\geq \sdepth(J/I).
\end{equation}
On the other hand, according to \cite[Lemma 2.5]{lucrare2} we have that
\begin{equation}\label{margini}
\min\{k\geq 0\;:\;\alpha_k(J/I)>0\}\leq \qdepth(J/I)\leq \max\{k\leq n\;:\;\alpha_k(J/I)>0\}.
\end{equation}

The aim of this paper is to show how, using the above combinatorial characterization of the Hilbert depth of $J/I$, we can reobtain the basic 
algebraic properties of the Hilbert depth. In order to do so, we make use of the Stanley-Reisner theory, i.e. the correspondence between (relative) 
simplicial complexes and (quotient of) squarefree monomial ideals.

If $\Delta\subset 2^{[n]}$ is a simplicial complex of dimension $d-1$ and $I=I_{\Delta}$ is the Stanley-Reisner ideal
associated to $\Delta$, we note in \eqref{alfad} that 
$\alpha_k(S/I)=f_{k-1}\text{ for all }0\leq k\leq d-1,$ where
$f=(f_{-1},f_0,\ldots,f_{d-1})$ is the $f$-vector of $\Delta$. Also, if $h=(h_0,h_1,\ldots,h_d)$ is the 
$h$-vector of $\Delta$, then we note in \eqref{betad} that
$\beta_k^d(S/I)=h_k\text{ for all }0\leq k\leq d$.

More generally, if $\Psi=(\Delta,\Gamma)$ is a relative simplicial complex of dimension $d-1$, 
where $\Gamma\subset\Delta\subset 2^{[n]}$ are simplicial complexes, $I=I_{\Delta}$ and $J=I_{\Gamma}$, 
then $\alpha_k(J/I)=f_{k-1}(\Psi)$ for all $0\leq k\leq d-1$; see \eqref{alfadd}. Also $\beta_k^d(S/I)=h_k(\Psi)$ for all
$0\leq k\leq d$; see \eqref{betadd}. For further details on simplicial complexes and their connection with 
commutative algebra we refer the reader to \cite{bh}. Also, we recommend \cite{adi} and \cite{stanley} for an introduction
in the theory of relative simplicial complexes.

In Theorem \ref{t11}, we prove that, for any proper squarefree monomial ideal $I\subset S$, we have
$$\depth(S/I)\leq \qdepth(S/I) \leq \dim(S/I) \leq n-1.$$
Using Theorem \ref{t11} and the fact that the $h$-vector of a Cohen-Macaulay simplicial complex $\Delta$ 
satisfies certain numerical conditions, in Theorem \ref{t1} we prove that if $I\subset S$ is 
a squarefree monomial ideal such that $S/I$ is Cohen-Macaulay, then
$$\qdepth(S/I)=\dim(S/I)=\depth(S/I)\text{ and }\qdepth(I)\geq \qdepth(S/I)+1.$$
In particular, if $I$ is a complete intersection ideal, minimally generated
by $m$ monomials, then $$\qdepth(S/I)=\depth(S/I)=\dim(S/I)=\depth(S/I)=n-m.$$
We note that, in one of the counterexample to the Stanley conjecture, Duval et. al. \cite{duval} constructed a squarefree
monomial ideal $I\subset K[x_1,\ldots,x_{16}]$ with $$3=\sdepth(S/I)<\depth(S/I)=\dim(S/I).$$
From Theorem \ref{t1}, it follows that $\qdepth(S/I)=4$ and $\qdepth(I)\geq \depth(I)=5$, see Example \ref{anti}.

In Section $3$ we extend several results from Section $2$ to quotient of squarefree monomial ideals. In Theorem \ref{t12}, we prove that
if $0\subset I\subsetneq J\subset S$ are two squarefree monomial ideals, then $\qdepth(J/I)\leq \dim(J/I)$ and that we have equality if
$J/I$ is Cohen-Macaulay. Also, in Theorem \ref{teo22} we show that $\depth(J/I)\leq \qdepth(J/I)$.

\section{Simplicial complexes and Hilbert depth}

We recall some basic facts about simplicial complexes.

A nonempty subposet $\Delta$ of $2^{[n]}$ is called a \emph{simplicial complex} if 
for any $F\in\Delta$ and $G\subset F$ then $G\in\Delta$. 

A subset $F\in\Delta$ is called face. The dimension of $F$ is $\dim(F)=|F|-1$.

The dimension of $\Delta$, denoted $\dim(\Delta)$, is the maximal dimension of a face of $F$.

 Assume that $\Delta$ has dimension $d-1$, for some integer $1\leq d\leq n$.
 The $f$-vector of $\Delta$ is $f=(f_{-1},f_0,f_1,\ldots,f_{d-1})$, where $f_i=f_i(\Delta)=$ the 
      number of faces of dimension $i$.

 We denote $\Delta^c=2^{[n]}\setminus \Delta$.

The squarefree monomial ideal
$$I:=I_{\Delta}=(x_F=\prod_{j\in F}x_j\;:\;F\in \Delta^c)\subset S,$$
is called the \emph{Stanley-Reisner} ideal of $\Delta$.

Note that $\PP_{I}=\Delta^c$ and $\PP_{S/I}=\Delta$. In particular, we have
\begin{equation}\label{alfad}
 \alpha_k(S/I)=\begin{cases} f_{k-1},& 0\leq k\leq d \\ 0,&  d+1\leq k\leq n \end{cases}
 \text{ and }\alpha_k(I)=\begin{cases} \binom{n}{k}-f_{k-1},& 0\leq k\leq d \\ \binom{n}{k},& d+1\leq k\leq n\end{cases}.
\end{equation}

 The $h$-vector of $\Delta$ is $h=(h_0,h_1,\ldots,h_{d})$, where
      $$h_k=\sum_{j=0}^k (-1)^{k-j} \binom{d-j}{k-j} f_{j-1},\text{ for all }0\leq k\leq d+1.$$
			The $h$-vector gives the coefficients of the denominator of the Hilbert-Poincare$'$ series of 
			the Stanley-Reisner ring $R:=K[\Delta]=S/I$,
			that is $$P_R(t)=\sum_{i\geq 0}\dim_K(R_i) t^i = \frac{h_0+h_1t+\cdots+h_dt^d}{(1-t)^d}.$$

From \eqref{betak} and \eqref{alfad}, we note that 
      \begin{equation}\label{betad} 
      \beta_k^d(S/I)=h_k(\Delta)\text{ for all }0\leq k\leq d.
			\end{equation}
      Moreover, for any integer $d'$ with $0\leq d'\leq d$, we have that
			\begin{equation}\label{betadp}
			\beta_k^{d'}(\Delta)=h_k(\Delta_{\leq d'-1}),\text{ where }\Delta_{\leq d'-1} = \{F\in\Delta\;:\;\dim(F)\leq d'-1\}.
			\end{equation}
			$\Delta_{\leq d'-1}$ is called the $(d'-1)$-skeleton of $\Delta$.

More concisely, we have $\beta^d(S/I)=h(\Delta)$ and $\beta^{d'}(S/I)=h(\Delta_{\leq d'-1})$.

A simplicial complex $\Delta$ is called \emph{Cohen-Macaulay} if the ring $S/I$ is Cohen-Macaulay.

We recall the following well know results:

\begin{lema}\label{dimc}(\cite[Theorem 5.1.4]{bh})
If $\Delta$ is a simplicial complex of dimension $d-1$ then 
$$\dim K[\Delta]=d.$$ 
\end{lema}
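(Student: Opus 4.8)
The plan is to read off the Krull dimension of $R := K[\Delta]$ from its Hilbert--Poincar\'e series by means of the Hilbert--Serre theorem: if a finitely generated graded $K$-algebra $R$ has Hilbert series $P_R(t) = Q(t)/(1-t)^e$ with $Q \in \mathbb{Z}[t]$ and $Q(1) \neq 0$, then $\dim R = e$. Thus it suffices to exhibit such a presentation with $e = d$.

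The formula $P_R(t) = (h_0 + h_1 t + \cdots + h_d t^d)/(1-t)^d$ recalled above already has denominator $(1-t)^d$, so the whole argument comes down to checking that the numerator $Q(t) = \sum_{k=0}^d h_k t^k$ does not vanish at $t = 1$, i.e.\ $\sum_{k=0}^d h_k \neq 0$. Here I would use the identity $\sum_{k=0}^d h_k t^k = \sum_{k=0}^d f_{k-1}\, t^k (1-t)^{d-k}$ (with $f_{-1} = 1$), which is immediate from $P_R(t) = \sum_{F \in \Delta} \bigl(t/(1-t)\bigr)^{|F|}$ and which matches the definition of the $h_k$ given in the excerpt; evaluating at $t = 1$ kills every summand with $d - k > 0$ and leaves $Q(1) = f_{d-1}$. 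Since $\dim \Delta = d-1$, the complex has at least one face of dimension $d-1$, so $f_{d-1} \geq 1 > 0$. Hence $Q(1) \neq 0$, the pole of $P_R(t)$ at $t = 1$ has order exactly $d$, and $\dim K[\Delta] = d$.

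There is no serious obstacle here: the only external ingredient is the Hilbert--Serre identification of Krull dimension with the pole order of the Hilbert series at $t=1$, which is entirely standard and which I would simply cite from \cite{bh} rather than reprove. For completeness I might note the alternative, purely combinatorial route --- the minimal primes of the Stanley--Reisner ideal $I_\Delta$ are exactly the $P_F = (x_i : i \notin F)$ with $F$ a facet of $\Delta$, whence $\dim S/I_\Delta = \max_F \dim S/P_F = \max_F |F| = \dim\Delta + 1 = d$ --- but since the paper works throughout with $f$- and $h$-vectors, the Hilbert-series argument meshes better with the surrounding material and is the version I would present.
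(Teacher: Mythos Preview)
Your argument is correct: the Hilbert--Serre identification of $\dim R$ with the pole order of $P_R(t)$ at $t=1$, together with $Q(1)=f_{d-1}\geq 1$, does the job, and your alternative via the minimal primes $P_F=(x_i:i\notin F)$ is equally valid.

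However, there is nothing to compare against: the paper does not prove this lemma at all. It is stated with a bare citation to \cite[Theorem 5.1.4]{bh} and used as a black box. (For the record, the proof in Bruns--Herzog is the minimal-primes argument you sketch at the end, not the Hilbert-series route.) So your write-up is more than the paper provides; if you are matching the paper, a one-line reference suffices.
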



\begin{lema}(\cite[Exercise 5.1.23]{bh})\label{minune}
With the above notations, we have that $$\depth(S/I)=\max\{d'\;:\;\Delta_{\leq d'-1}\text{ is Cohen Macaulay }\}.$$
\end{lema}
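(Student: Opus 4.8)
The plan is to track how $\depth$ changes along the filtration $\{\emptyset\}=\Delta_{\le-1}\subset\Delta_{\le0}\subset\cdots\subset\Delta_{\le d-1}=\Delta$ by skeletons, using short exact sequences of Stanley--Reisner rings. For $-1\le r\le d-1$ set $R_r=K[\Delta_{\le r}]=S/I_{\Delta_{\le r}}$ and $q_r=\depth R_r$. By Lemma~\ref{dimc} (and trivially for $r=-1$) we have $\dim R_r=r+1$, so $\Delta_{\le r}$ is Cohen--Macaulay exactly when $q_r=r+1$. Since $R_{d-1}=S/I$, it suffices to prove that $q_r=\min(r+1,\depth(S/I))$ for all $-1\le r\le d-1$.

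Comparing non-faces, one has $I_{\Delta_{\le r-1}}=I_{\Delta_{\le r}}+\big(x_F\;:\;F\in\Delta,\ \dim F=r\big)$, so for $0\le r\le d-1$ there is a short exact sequence of $S$-modules
$$0\longrightarrow M_r\longrightarrow R_r\longrightarrow R_{r-1}\longrightarrow0,\qquad M_r:=\big(\bar x_F\;:\;F\in\Delta,\ \dim F=r\big)R_r .$$
The key computational point is that $\depth M_r=r+1$. Indeed, each $r$-face $F$ is a facet of $\Delta_{\le r}$, so $\bar x_F R_r=\bar x_F\,K[x_i\;:\;i\in F]\cong K[x_i\;:\;i\in F](-r-1)$, which is Cohen--Macaulay of dimension $r+1$; moreover, if $F\ne F'$ are distinct $r$-faces then $\bar x_F R_r\cap\bar x_{F'}R_r=0$, because a nonzero common element would be a monomial whose support equals both $F$ and $F'$. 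Hence $M_r=\bigoplus_{\dim F=r}\bar x_F\,K[x_i\;:\;i\in F]$ is a finite direct sum of shifted polynomial rings in $r+1$ variables, whence $\depth M_r=r+1$.

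Now feed this into the sequence. From the depth lemma one gets $q_{r-1}\ge\min(\depth M_r-1,q_r)=\min(r,q_r)$; and since $\depth R_r<\depth M_r$ whenever $q_r\le r$, in that case $q_{r-1}=q_r$, while if $q_r=r+1$ then $q_{r-1}\ge r$ together with $q_{r-1}\le\dim R_{r-1}=r$ force $q_{r-1}=r$. Thus $q_{r-1}=q_r$ if $q_r\le r$, and $q_{r-1}=r$ if $q_r=r+1$. A downward induction on $r$, with base case $q_{d-1}=\depth(S/I)=\min(d,\depth(S/I))$ (using $\depth(S/I)\le\dim(S/I)=d$ by Lemma~\ref{dimc}), then yields $q_r=\min(r+1,\depth(S/I))$ for all $-1\le r\le d-1$. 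Consequently $\Delta_{\le d'-1}$ is Cohen--Macaulay, i.e.\ $q_{d'-1}=d'$, precisely when $d'\le\depth(S/I)$, for $0\le d'\le d$; taking the maximum gives $\depth(S/I)=\max\{\,d'\;:\;\Delta_{\le d'-1}\text{ is Cohen--Macaulay}\,\}$.

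I expect the main obstacle to be the identification of $M_r$ and the proof that it splits as a direct sum of polynomial subrings of $R_r$: this is exactly the place where one uses that passing to the $r$-skeleton makes every $r$-face maximal, so that the generators $\bar x_F$ no longer interact. Once $\depth M_r=r+1$ is in hand, the remainder is a routine application of the depth lemma together with the dimension formula $\dim R_r=r+1$ from Lemma~\ref{dimc}.
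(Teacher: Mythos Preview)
Your argument is correct and actually sharper than the paper's: you establish the precise formula $\depth K[\Delta_{\le r}]=\min(r+1,\depth K[\Delta])$ for every $r$, from which the lemma drops out immediately. The identification of $M_r$ as the direct sum $\bigoplus_{\dim F=r}\bar x_F\,K[x_i:i\in F]$ is the heart of the proof, and your justification (each $r$-face is a facet of $\Delta_{\le r}$, so $\bar x_F\bar x_j=0$ for $j\notin F$, and monomials in different summands have distinct supports) is clean and complete.

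The paper's proof is organised differently. Instead of filtering by skeletons, it inducts on the number of faces of $\Delta$: remove a single top-dimensional face $F$, set $\Delta_1=\Delta\setminus\{F\}$, $\Delta_2=\langle F\rangle$, and apply the depth lemma to the Mayer--Vietoris sequence
\[
0\to K[\Delta]\to K[\Delta_1]\oplus K[\Delta_2]\to K[\Delta_1\cap\Delta_2]\to 0.
\]
Here $K[\Delta_2]$ is a polynomial ring and $\Delta_1\cap\Delta_2=\partial F$ is a sphere, both Cohen--Macaulay, so one compares $\depth K[\Delta]$ with $\depth K[\Delta_1]$ and finishes by induction. Your approach is more self-contained (no appeal to the Cohen--Macaulayness of simplex boundaries) and delivers the depth of every skeleton at once; the paper's approach is shorter to state but leaves more of the induction step to the reader.
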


\begin{proof}
The result follows by induction on the number of faces of $\Delta$. If $\Delta$ is Cohen Macaulay, then there is nothing to prove.
Else, let $F\in \Delta$ with $\dim(F)=\dim(\Delta)$, $\Delta_1=\Delta\setminus\{F\}$ and $\Delta_2=\langle F\rangle$.
The induction step follows from the short exact sequence
$$0\to K[\Delta_1\cup \Delta_2] \to K[\Delta_1]\oplus K[\Delta_2] \to K[\Delta_1\cap \Delta_2] \to 0$$
and the Depth lemma.
\end{proof}

Given two positive integers $\ell,k$ there is a unique way
to expand $\ell$ as a sum of binomial coefficients, as follows
$$ \ell = \binom{n_k}{k}+\binom{n_{k-1}}{k-1}+\cdots+\binom{n_j}{j},\; n_k>n_{k-1}>\cdots>n_j\geq j\geq 1.$$
This expansion is constructed using the greedy algorithm, i.e. setting $n_k$ to be the maximal $n$ such that $\ell\geq\binom{n}{k}$,
replace $\ell$ with $\ell-\binom{n_k}{k}$ and $k$ with $k-1$ and repeat until the difference becomes zero.
We define $$\ell^{(k)}=\binom{n_k}{k+1}+\binom{n_{k-1}}{k}+\cdots+\binom{n_j}{j+1}.$$
We recall the following well known results, see for instance \cite[Theorem 5.1.10]{bh} and \cite[Theorem 5.1.15]{bh}.

\begin{teor}\label{cm}
Let $\Delta$ be a Cohen-Macaulay simplicial complex of dimension $d-1$,
with the $h$-vector, $h=(h_0,h_1,\ldots,h_d)$. Then
\begin{enumerate}
\item[(1)] $0\leq h_k \leq \binom{n-d+k-1}{k}\text{ for all }0\leq k\leq d$.
\item[(2)] $0\leq h_{k+1} \leq h_{k}^{(k)}$, for all $1\leq k\leq d-1$.
\end{enumerate}
\end{teor}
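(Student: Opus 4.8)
The plan is to reduce everything to the Artinian reduction of the Stanley--Reisner ring and then invoke Macaulay's growth theorem. Since $K$ is infinite and $\dim K[\Delta]=d$ by Lemma \ref{dimc}, I would first choose a linear system of parameters $\theta_1,\ldots,\theta_d\in R_1$ for $R:=K[\Delta]$. Because $\Delta$ is Cohen--Macaulay, this sequence is $R$-regular, so the quotient $\bar R:=R/(\theta_1,\ldots,\theta_d)$ is a finite-dimensional standard graded $K$-algebra with $\bar R_0=K$, and its Hilbert series is obtained by clearing the denominator of $P_R(t)$:
$$P_{\bar R}(t)=(1-t)^dP_R(t)=h_0+h_1t+\cdots+h_dt^d.$$
Reading off coefficients gives $h_k=\dim_K\bar R_k\geq 0$ for all $0\leq k\leq d$, which already establishes the left-hand inequalities in both (1) and (2).

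For the upper bound in (1), I would take the $\theta_i$ to be \emph{generic} linear forms, so that after a change of coordinates $S/(\theta_1,\ldots,\theta_d)\cong K[y_1,\ldots,y_{n-d}]$, a polynomial ring in $n-d$ variables. Since $\bar R$ is a cyclic quotient of this ring, each graded component has dimension at most that of the polynomial ring, whence
$$h_k=\dim_K\bar R_k\leq\dim_K K[y_1,\ldots,y_{n-d}]_k=\binom{n-d+k-1}{k}.$$
This is assertion (1).

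For (2), the key point is that $\bar R$ is a standard graded $K$-algebra with $\bar R_0=K$, so its Hilbert function $k\mapsto h_k$ is an O-sequence: by Macaulay's theorem characterising the Hilbert functions of graded $K$-algebras, consecutive values satisfy $h_{k+1}\leq h_k^{(k)}$ for every $k\geq 1$, which is exactly (2). The one non-routine ingredient is Macaulay's growth bound itself, which I would invoke as a black box (it is \cite[Theorem 5.1.15]{bh}); everything else is the standard device of reducing a Cohen--Macaulay graded ring modulo a regular linear system of parameters, together with the elementary observation that such a reduction is a cyclic quotient of a polynomial ring in $n-d$ variables.
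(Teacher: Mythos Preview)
Your argument is correct and is the standard proof via Artinian reduction followed by Macaulay's growth bound. There is nothing to compare against, however: the paper does not prove this theorem at all. It merely records it as a well-known fact and points to \cite[Theorem 5.1.10]{bh} and \cite[Theorem 5.1.15]{bh}, which is exactly the source you invoke for Macaulay's theorem. In other words, you have supplied the proof that the paper chose to omit, and your sketch matches the argument one finds in those references.
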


Without further ado, we prove our first main result:

\begin{teor}\label{t11}
Let $I\subset S$ be a proper squarefree monomial ideal. Then:
$$\depth(S/I)\leq \qdepth(S/I) \leq \dim(S/I) \leq n-1.$$
\end{teor}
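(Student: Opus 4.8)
The inequality $\dim(S/I)\le n-1$ is immediate: since $I$ is a proper squarefree monomial ideal, $I\neq 0$, so the associated simplicial complex $\Delta$ (with $I=I_\Delta$) is not the full simplex $2^{[n]}$, hence $\dim\Delta\le n-2$, i.e.\ $d\le n-1$, and by Lemma \ref{dimc} we have $\dim(S/I)=d\le n-1$. For the middle inequality $\qdepth(S/I)\le\dim(S/I)=d$, I would invoke the right-hand bound in \eqref{margini}: since $\alpha_k(S/I)=f_{k-1}=0$ for $k\ge d+1$ by \eqref{alfad}, we get $\qdepth(S/I)\le\max\{k\le n:\alpha_k(S/I)>0\}=d$. (Alternatively one could cite $\hdepth(M)\le\dim(M)$ from the introduction directly, but going through \eqref{margini} keeps the argument inside the combinatorial framework the paper is advertising.)

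The substantive inequality is $\depth(S/I)\le\qdepth(S/I)$. The plan is to use Lemma \ref{minune}: if $d'=\depth(S/I)$, then the $(d'-1)$-skeleton $\Delta_{\le d'-1}$ is Cohen–Macaulay. I then want to show $\qdepth(S/I)\ge d'$, which by the combinatorial characterization of $\qdepth$ from \cite[Theorem 2.4]{lucrare2} amounts to proving $\beta_k^{d'}(S/I)\ge 0$ for all $0\le k\le d'$. By \eqref{betadp} we have $\beta_k^{d'}(S/I)=h_k(\Delta_{\le d'-1})$, and since $\Delta_{\le d'-1}$ is a Cohen–Macaulay complex of dimension $d'-1$, part (1) of Theorem \ref{cm} gives precisely $h_k(\Delta_{\le d'-1})\ge 0$ for all $0\le k\le d'$. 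Hence $\beta_k^{d'}(S/I)\ge 0$ for all such $k$, so $\qdepth(S/I)\ge d'=\depth(S/I)$, as desired.

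The main obstacle, such as it is, is making sure the bookkeeping of indices and dimensions in \eqref{betadp} and Theorem \ref{cm} lines up correctly — in particular that $\Delta_{\le d'-1}$ genuinely has dimension $d'-1$ (it does, provided $d'\le d$, which holds since $\depth\le\dim$; the degenerate cases $d'=0$ or $I$ already Cohen–Macaulay should be checked separately but are routine). Once that is pinned down, the three inequalities chain together immediately, and the only real input beyond the paper's own earlier results is the classical nonnegativity of $h$-vectors of Cohen–Macaulay complexes, which is exactly Theorem \ref{cm}(1).
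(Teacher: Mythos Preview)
Your proposal is correct and follows essentially the same route as the paper's own proof: the paper likewise uses Lemma~\ref{dimc} and \eqref{alfad} together with \eqref{margini} for the upper bound, and Lemma~\ref{minune} combined with \eqref{betadp} and Theorem~\ref{cm}(1) for the lower bound $\depth(S/I)\le\qdepth(S/I)$. Your extra remarks on the bookkeeping (that $\Delta_{\le d'-1}$ really has dimension $d'-1$, and the justification of $d\le n-1$) are fine elaborations that the paper leaves implicit.
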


\begin{proof}
Let $\Delta$ be the simplicial complex with $I=I_{\Delta}$. 
If $\dim(\Delta)=d-1$ then, according to Lemma \ref{dimc}, we have that $\dim(S/I)=d \leq n-1$.
On the other hand, from \eqref{alfad} it follows that
$\max\{k\;:\;\alpha_k(S/I)>0\}=d$.
Thus, from \eqref{margini} it follows that $\qdepth(S/I)\leq \dim(S/I)$.

Let $d'=\depth(S/I)$. According to Lema \ref{minune}, we have that $\Delta_{\leq d'-1}$ is Cohen-Macaulay.
From \eqref{betadp} and Theorem \ref{cm} it follows that 
$$\beta_k^{d'}(S/I)\geq 0\text{ for all }0\leq k\leq d'.$$
Therefore $\qdepth(S/I)\geq d'$, as required.
\end{proof}

We recall the following combinatorial formula:

\begin{lema}\label{magic}
For any integers $0\leq k\leq d$ and $n\geq 0$ we have that
$$ \sum_{j=0}^k (-1)^{k-j} \binom{d-j}{k-j}\binom{n}{j} = \binom{n-d+k-1}{k}.$$
\end{lema}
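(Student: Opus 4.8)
The plan is to collapse the alternating sum into a single Vandermonde--Chu convolution by one application of the upper-negation rule for binomial coefficients. Throughout, $\binom{m}{r}$ is read with the usual polynomial convention: it is a polynomial in $m$, and equals $0$ whenever the integer $r$ is negative.

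First I would rewrite the coefficient $(-1)^{k-j}\binom{d-j}{k-j}$. Since $0\leq j\leq k$, the lower index $k-j$ is a non-negative integer, so the upper-negation identity $\binom{r}{m}=(-1)^m\binom{m-r-1}{m}$ (valid for every integer $m\geq 0$ and every integer $r$) applies with $r=d-j$ and $m=k-j$, giving
$$\binom{d-j}{k-j}=(-1)^{k-j}\binom{k-d-1}{k-j},\qquad\text{hence}\qquad (-1)^{k-j}\binom{d-j}{k-j}=\binom{k-d-1}{k-j}.$$
Substituting this into the left-hand side of the asserted identity turns it into the plain convolution $\sum_{j=0}^k \binom{k-d-1}{k-j}\binom{n}{j}$.

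Next I would invoke the Vandermonde--Chu identity $\sum_{j\in\mathbb Z}\binom{a}{m-j}\binom{b}{j}=\binom{a+b}{m}$ with $a=k-d-1$, $b=n$, and $m=k$. The restriction of the summation to $0\leq j\leq k$ is harmless: for $j<0$ the term vanishes because $\binom{n}{j}=0$, and for $j>k$ it vanishes because $\binom{k-d-1}{k-j}=0$ (negative lower index). This yields $\binom{n+k-d-1}{k}=\binom{n-d+k-1}{k}$, exactly the right-hand side. An equally short alternative is the generating-function route: write $(-1)^{k-j}\binom{d-j}{k-j}=[x^{k-j}](1+x)^{-(d-k+1)}$ and $\binom{n}{j}=[x^{j}](1+x)^{n}$, so the sum is $[x^{k}]\big((1+x)^{-(d-k+1)}(1+x)^{n}\big)=[x^{k}](1+x)^{n-d+k-1}=\binom{n-d+k-1}{k}$.

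There is essentially no obstacle here: the only point that needs care is that $k-d-1$ may be negative (this occurs precisely in the degenerate cases, e.g. when $n$ is small relative to $d$), so both the upper-negation step and the Vandermonde convolution must be understood as identities of polynomials in the upper arguments rather than as counting identities, and the convention $\binom{m}{r}=0$ for $r<0$ must be used consistently. Once that is fixed, the two displayed manipulations complete the proof.
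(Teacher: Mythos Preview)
Your proof is correct and is essentially the same as the paper's: the paper simply states that the identity ``is a direct application of the Chu--Vandermonde summation,'' and your upper-negation step followed by the Vandermonde--Chu convolution is exactly how one makes that application explicit.
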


\begin{proof}
It is a direct application of the Chu-Vandermonde summation.
\end{proof}

\begin{lema}\label{lem1}
Let $I\subset S$ be a proper squarefree monomial ideal. 
For any $0\leq k\leq d\leq n$ we have that 
$$\beta_k^d(I) = \binom{n-d+k-1}{k} - \beta_k^d(S/I).$$
\end{lema}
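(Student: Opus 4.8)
The plan is to prove the identity $\beta_k^d(I) = \binom{n-d+k-1}{k} - \beta_k^d(S/I)$ by direct computation from the definition \eqref{betak} of $\beta_k^d$, using the explicit description of $\alpha_j(I)$ and $\alpha_j(S/I)$ from \eqref{alfad} together with the combinatorial identity in Lemma \ref{magic}.

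First I would write out $\beta_k^d(I) = \sum_{j=0}^k (-1)^{k-j}\binom{d-j}{k-j}\alpha_j(I)$ and substitute $\alpha_j(I) = \binom{n}{j} - f_{j-1} = \binom{n}{j} - \alpha_j(S/I)$, which is valid for all $0\le j\le k\le d$ by \eqref{alfad} (note that in this range $j\le d$, so we are always in the first branch of the case distinction and $\alpha_j(S/I) = f_{j-1} = \alpha_j(I)^c$-complement holds). Splitting the sum along this difference gives
$$\beta_k^d(I) = \sum_{j=0}^k (-1)^{k-j}\binom{d-j}{k-j}\binom{n}{j} - \sum_{j=0}^k (-1)^{k-j}\binom{d-j}{k-j}\alpha_j(S/I).$$
The second sum is exactly $\beta_k^d(S/I)$ by \eqref{betak}, and the first sum is $\binom{n-d+k-1}{k}$ by Lemma \ref{magic}. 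Combining these yields the claim.

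There is essentially no obstacle here: the statement is a routine bookkeeping consequence of linearity of the operator $\alpha \mapsto \beta^d$ applied to the decomposition $\alpha_j(I) + \alpha_j(S/I) = \binom{n}{j}$, plus the evaluation of $\beta^d$ on the sequence $\binom{n}{j}$ supplied by Lemma \ref{magic}. The only point requiring a moment of care is confirming that the range of indices $0\le j\le k\le d$ keeps us inside the first branch of \eqref{alfad} for both $\alpha_j(I)$ and $\alpha_j(S/I)$, so that the identity $\alpha_j(I) = \binom{n}{j} - \alpha_j(S/I)$ is the correct one to substitute; this is immediate since $j\le k\le d$.
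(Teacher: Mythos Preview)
Your proof is correct and is exactly the argument the paper gives: use \eqref{betak}, substitute $\alpha_j(I)=\binom{n}{j}-\alpha_j(S/I)$, and apply Lemma~\ref{magic} to the binomial sum. The only superfluous concern is your care about the index range in \eqref{alfad}; in fact $\alpha_j(I)+\alpha_j(S/I)=\binom{n}{j}$ holds for \emph{all} $0\le j\le n$ (the squarefree monomials of degree $j$ are partitioned between $I$ and $S/I$), so no case check is needed.
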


\begin{proof}
It follows from \eqref{betak}, Lemma \ref{magic} and the fact that $\alpha_j(I)=\binom{n}{j}-\alpha_j(S/I)$ for all $0\leq j\leq n$.
\end{proof}

\begin{lema}\label{lem11}
For any $1\leq k\leq d\leq n$ we have that 
$$\beta_{k}^{d+1}(S/I)=\beta_k^d(S/I)-\beta_{k-1}^{d}(S/I).$$
\end{lema}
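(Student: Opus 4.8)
The plan is to prove the identity by a direct computation starting from the defining formula \eqref{betak}, which reads
$$\beta_k^{q}(S/I)=\sum_{j=0}^k (-1)^{k-j}\binom{q-j}{k-j}\alpha_j(S/I)$$
for every admissible $q$. I would first write out the right-hand side $\beta_k^d(S/I)-\beta_{k-1}^d(S/I)$ as a difference of two such sums, the first running over $0\le j\le k$ and the second over $0\le j\le k-1$. Using $(-1)^{k-1-j}=-(-1)^{k-j}$, the subtracted sum can be absorbed into a single sum, so that
$$\beta_k^d(S/I)-\beta_{k-1}^d(S/I)=\sum_{j=0}^{k}(-1)^{k-j}\binom{d-j}{k-j}\alpha_j(S/I)+\sum_{j=0}^{k-1}(-1)^{k-j}\binom{d-j}{k-1-j}\alpha_j(S/I).$$

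Next I would compare this with $\beta_k^{d+1}(S/I)=\sum_{j=0}^k(-1)^{k-j}\binom{d+1-j}{k-j}\alpha_j(S/I)$ coefficient by coefficient. For $0\le j\le k-1$ the coefficient of $(-1)^{k-j}\alpha_j(S/I)$ on the right is $\binom{d-j}{k-j}+\binom{d-j}{k-1-j}$, which equals $\binom{d+1-j}{k-j}$ by Pascal's rule; for $j=k$ the second sum does not contribute and the coefficient is $\binom{d-k}{0}=1=\binom{d+1-k}{0}$. Hence the two expressions agree term by term, which gives the claimed equality. The hypothesis $1\le k$ guarantees that the index range $0\le j\le k-1$ of the auxiliary sum is nonempty-or-empty but meaningful, and $k\le d$ ensures all the binomial coefficients involved are the usual ones, so no degenerate case needs separate treatment.

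There is essentially no serious obstacle here: the only point requiring a moment's care is the sign bookkeeping when rewriting $\beta_{k-1}^d$ so that its terms line up with those of $\beta_k^d$, together with the invocation of Pascal's identity. Everything else is a routine reindexing, and the statement can equivalently be seen as the module-theoretic shadow of the elementary power-series identity $(1-t)^{d+1}H(t)=(1-t)^d H(t)-t\cdot(1-t)^d H(t)$, which could be mentioned as a conceptual remark but is not needed for the formal argument.
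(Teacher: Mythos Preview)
Your argument is correct and is exactly the approach the paper takes: the paper's proof simply invokes the defining formula \eqref{betak} together with Pascal's identity $\binom{d+1-j}{k-j}=\binom{d-j}{k-1-j}+\binom{d-j}{k-j}$ and leaves the details to the reader, and you have supplied precisely those details.
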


\begin{proof}
It follows from \eqref{betak} and the identity 
$$\binom{d+1-j}{k-j}=\binom{d-j}{k-1-j}+\binom{d-j}{k-j}.$$
We leave the details to the reader.
\end{proof}

In the following Theorem, we tackle the Cohen-Macaulay case: 

\begin{teor}\label{t1}
Let $I\subset S$ be a proper squarefree monomial ideal such that $S/I$ is Cohen-Macaulay. Then:
\begin{enumerate} 
\item[(1)] $\sdepth(S/I)\leq \qdepth(S/I)=\dim(S/I)=\depth(S/I)$.
\item[(2)] $\qdepth(I)\geq\qdepth(S/I)+1$.
\end{enumerate}
\end{teor}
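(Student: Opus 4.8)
The plan is to read off part (1) directly from the results already in hand and to reduce part (2) to a single numerical inequality about the $h$-vector of a Cohen--Macaulay complex, which I would then settle by a generic hyperplane section. For (1): write $I=I_\Delta$ with $\dim\Delta=d-1$, so $\dim(S/I)=d$ by Lemma \ref{dimc}. Since $S/I$ is Cohen--Macaulay, $\depth(S/I)=\dim(S/I)=d$, and Theorem \ref{t11} squeezes $\qdepth(S/I)$ between these two equal numbers, forcing $\qdepth(S/I)=\dim(S/I)=\depth(S/I)$; the inequality $\sdepth(S/I)\le\qdepth(S/I)$ is \eqref{qms}.

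For (2) we must show $\qdepth(I)\ge d+1$, noting $d+1\le n$ by Theorem \ref{t11}. By the characterization of Hilbert depth from \cite{lucrare2} recalled in the Introduction it suffices to prove $\beta_k^{d+1}(I)\ge0$ for $0\le k\le d+1$. The extreme cases are direct: $\beta_0^{d+1}(I)=\alpha_0(I)=0$ since $1\notin I$, while expanding $\beta_{d+1}^{d+1}(I)=\sum_{j=0}^{d+1}(-1)^{d+1-j}\alpha_j(I)$ with $\alpha_j(I)=\binom{n}{j}-f_{j-1}$ for $j\le d$ and $\alpha_{d+1}(I)=\binom{n}{d+1}$, then using Lemma \ref{magic} and the definition of the $h$-vector, gives $\beta_{d+1}^{d+1}(I)=\binom{n-1}{d+1}+h_d\ge0$. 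For $1\le k\le d$, combining Lemma \ref{lem1} applied with $d+1$ in place of $d$, Lemma \ref{lem11}, and $\beta_j^d(S/I)=h_j(\Delta)$ from \eqref{betad} gives
$$\beta_k^{d+1}(I)=\binom{n-d+k-2}{k}-\bigl(h_k(\Delta)-h_{k-1}(\Delta)\bigr),$$
so (2) reduces to the inequality $h_k(\Delta)-h_{k-1}(\Delta)\le\binom{n-d+k-2}{k}$ for $1\le k\le d$.

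To prove this I would pass to an Artinian reduction. Since $K$ is infinite and $S/I$ is Cohen--Macaulay of dimension $d$, there is a linear system of parameters $\theta_1,\dots,\theta_d$, and $A:=(S/I)/(\theta_1,\dots,\theta_d)$ is a standard graded Artinian $K$-algebra with $\dim_K A_j=h_j(\Delta)$ for all $j$; in particular $A$ is generated in degree $1$ by $\dim_K A_1=h_1(\Delta)\le n-d$ elements (Theorem \ref{cm}(1) for $k=1$, or simply that $\Delta$ has at most $n$ vertices). If $h_1=0$ the inequality is trivial; otherwise choose a nonzero $\ell\in A_1$, so that, after killing a variable, $A/\ell A$ is a quotient of a polynomial ring in $h_1-1$ variables and hence $\dim_K(A/\ell A)_k\le\binom{h_1+k-2}{k}$. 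Since $\dim_K(A/\ell A)_k=\dim_K A_k-\dim_K(\ell A_{k-1})\ge\dim_K A_k-\dim_K A_{k-1}=h_k-h_{k-1}$, we conclude $h_k-h_{k-1}\le\binom{h_1+k-2}{k}\le\binom{n-d+k-2}{k}$, which is what we needed.

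The one genuine obstacle is precisely this inequality $h_k-h_{k-1}\le\binom{n-d+k-2}{k}$: the bound $h_k\le\binom{n-d+k-1}{k}$ of Theorem \ref{cm}(1) is by itself too weak. An alternative to the hyperplane-section route is to use Theorem \ref{cm}(2) instead: from $h_{k-1}\le\binom{n-d+k-2}{k-1}$, $h_k\le h_{k-1}^{(k-1)}$, and the monotonicity of the Macaulay operator one obtains $h_k\le\binom{n-d+k-2}{k}$ for $k\ge2$, while $h_1-h_0=h_1-1\le n-d-1$ handles $k=1$. Everything else in the proof is immediate or a binomial manipulation of the kind already carried out for Lemmas \ref{magic} and \ref{lem1}.
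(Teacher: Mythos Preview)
Your proof is correct. Part (1) matches the paper's argument verbatim. For part (2) you derive the same reduction as the paper, arriving at the key inequality
\[
h_k(\Delta)-h_{k-1}(\Delta)\le\binom{n-d+k-2}{k}\qquad(1\le k\le d),
\]
via Lemmas \ref{lem1} and \ref{lem11}, together with the same treatment of the endpoints $k=0$ and $k=d+1$.

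Where you differ is in how you establish this inequality. Your primary route---passing to the Artinian reduction $A=(S/I)/(\boldsymbol\theta)$ and quotienting by a general linear form $\ell\in A_1$ to bound $h_k-h_{k-1}\le\dim_K(A/\ell A)_k\le\binom{h_1+k-2}{k}\le\binom{n-d+k-2}{k}$---is not what the paper does. The paper instead invokes Theorem \ref{cm}(2), i.e.\ Macaulay's bound $h_{k+1}\le h_k^{(k)}$, together with Theorem \ref{cm}(1) and monotonicity of $\ell\mapsto\ell^{(k)}$, to conclude $h_{k+1}-h_k\le h_k^{(k)}\le\binom{n-d+k-1}{k+1}$. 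This is precisely the ``alternative'' you sketch at the end, so you have in fact anticipated the paper's argument as your second option.

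Your hyperplane-section argument has the advantage of being self-contained: it uses only the existence of a linear system of parameters (available since $K$ is infinite) and elementary dimension counts, bypassing Macaulay's theorem entirely. The paper's route is shorter on the page because it outsources the work to Theorem \ref{cm}, but that theorem is itself the deeper input. One minor phrasing issue: ``after killing a variable'' is slightly loose, since $\ell$ need not be a variable of $A$; what you use is only that $A/\ell A$ is standard graded with $\dim_K(A/\ell A)_1=h_1-1$, which is correct.
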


\begin{proof}
(1) The inequality $\sdepth(S/I)\leq \qdepth(S/I)$ is a particular case of \eqref{qms}.
The equality $\dim(S/I)=\depth(S/I)$ is the definition of a Cohen-Macauly ring. Now,
the conclusion follows from Theorem \ref{t11}.

(2) As in the proof of Theorem \ref{t11}, we can assume that
$I=I_{\Delta}$ where $\Delta$ is a simplicial complex of dimension $d-1$
with $d=\dim(S/I)$. From Theorem \ref{cm}(1) and \eqref{betad} we have 
$$\beta_k^d(S/I) = h_k \geq 0\text{ for all }0\leq k\leq d.$$
On the other hand, from Lemma \ref{lem1} and Lemma \ref{lem11} it follows that
\begin{equation}\label{cuc}
 \beta_{k+1}^{d+1}(I) = 
 \binom{n-d+k-1}{k+1} -\left(\beta_{k+1}^d(S/I) - \beta_k^d(S/I)\right),
 \text{for all }0\leq k\leq d-1.
\end{equation}
On the other hand, from Theorem \ref{cm}(2) and \eqref{betad} we have that
\begin{equation}\label{cucu}
\beta_{k+1}^d(S/I)-\beta_k^d(S/I) \leq \beta_k^d(S/I)^{(k)}-\beta_k^d(S/I) \leq \beta_k^d(S/I)^{(k)},\text{for all }1\leq k\leq d-1.
\end{equation}
Since, according to Theorem \ref{cm}(1), we have $\beta_k^d(S/I)\leq \binom{n-d+k-1}{k}$ for all $0\leq k\leq d$,
it follows that 
$$\beta_k^d(S/I)^{(k)}\leq \binom{n-d+k-1}{k+1}\text{ for all }1\leq k\leq d.$$
Therefore, from \eqref{cuc} and \eqref{cucu} we get
$$\beta_{k+1}^{d+1}(I) \geq 0 \text{ for all }0\leq k\leq d-1.$$
On the other hand, $\beta_0^{d+1}(I)=\alpha_0(I)=0$ and, since $\alpha_{d+1}(S/I)=0$, we have
$$\beta_{d+1}^{d+1}(I) = \binom{n-1}{d+1} - \beta_{d+1}^{d+1}(S/I) = \binom{n-1}{d+1} + \beta_d^d(S/I) \geq 0.$$
Hence $\beta_k^{d+1}(I)\geq 0$ for all $0\leq k\leq d+1$, as required.
\end{proof}

In particular, we can reprove the following well known result:

\begin{cor}

If $I\subset S$ is a complete intersection monomial ideal, minimally generated by $m$ monomials, then 
$\qdepth(S/I)=\dim(S/I)=\sdepth(S/I)=\depth(S/I)=n-m$.
\end{cor}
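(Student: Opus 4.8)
The plan is to reduce everything to the case of a single variable and then invoke Theorem~\ref{t1}. First I would recall that a complete intersection monomial ideal $I = (u_1, \ldots, u_m)$ with the $u_i$ having pairwise disjoint supports is, up to a change of variables on the monomial level, generated by $m$ monomials in disjoint sets of variables; the Koszul complex on $u_1, \ldots, u_m$ is a minimal free resolution, so a standard depth computation (or the Auslander--Buchsbaum formula) gives $\depth(S/I) = n - m$. Since $I$ is a complete intersection, $S/I$ is Cohen--Macaulay, hence $\dim(S/I) = \depth(S/I) = n-m$ as well.

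Next I would feed this into Theorem~\ref{t1}(1): since $S/I$ is Cohen--Macaulay, we immediately get
$$\sdepth(S/I) \leq \qdepth(S/I) = \dim(S/I) = \depth(S/I) = n-m.$$
The one remaining inequality needed to collapse the chain to an equality is $\sdepth(S/I) \geq n-m$. For this I would exhibit an explicit Stanley decomposition, or equivalently an interval partition of $P_{S/I}$, with all blocks of size $\geq n-m$. Concretely, writing $Z$ for the set of variables not appearing in any generator (so $|Z| = n - \sum |\supp(u_i)|$ at worst, but one can do better), one builds a Stanley decomposition of $S/I$ in which every subspace $m_\ell K[Z_\ell]$ has $|Z_\ell| = n-m$; this is classical for complete intersections of monomials — decompose each single-variable quotient $K[x]/(x^{a})$ as $\bigoplus_{i=0}^{a-1} x^i K$ and take tensor products, keeping all the "free" variables in every block. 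That forces $\sdepth(S/I) \geq n-m$, and combined with the displayed chain we conclude $\sdepth(S/I) = \qdepth(S/I) = \dim(S/I) = \depth(S/I) = n-m$.

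The bulk of the work — and the only place anything nontrivial happens — is actually already done: it is Theorem~\ref{t1}. The genuinely new content of the corollary is just the numerical identification $\depth(S/I) = n-m$ for a monomial complete intersection together with the lower bound $\sdepth(S/I) \geq n-m$, both of which are standard. So the main (minor) obstacle is bookkeeping: being careful that the $m$ minimal monomial generators of a complete intersection really do have pairwise disjoint supports (this is what "complete intersection" forces for monomial ideals), so that the Koszul/tensor-product arguments apply cleanly and the count of variables works out to exactly $n-m$. Once that structural fact is in hand, the corollary is an immediate specialization of Theorem~\ref{t1}, with the extra $\sdepth$ bound supplied by the explicit decomposition.
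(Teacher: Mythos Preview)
Your proposal is correct and follows essentially the same route as the paper: invoke Theorem~\ref{t1}(1) for the Cohen--Macaulay quotient to get $\qdepth(S/I)=\dim(S/I)=\depth(S/I)$, and then separately pin down $\sdepth(S/I)=n-m$. The only difference is cosmetic: the paper dispatches the $\sdepth$ equality by citing Rauf \cite[Theorem~1.1]{asia1}, whereas you sketch the explicit Stanley decomposition that underlies Rauf's result.
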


\begin{proof}
The fact that $\sdepth(S/I)=n-m$ was proved by Rauf, see \cite[Theorem 1.1]{asia1}.
The other equalities follows from the previous theorem.
\end{proof}

\begin{exm}\label{anti}\rm
We consider the ideal $I=(x_{13}x_{16}
 ,x_{12}x_{16}
,x_{11}x_{16}
,x_{10}x_{16}
,x_{9}x_{16}
,x_{8}x_{16}
,x_{6}x_{16}, \linebreak
x_{3}x_{16}
,x_{1}x_{16}
,x_{13}x_{15}
,x_{12}x_{15}
,x_{11}x_{15}
,x_{10}x_{15}
,x_{9}x_{15}
,x_{8}x_{15}
,x_{3}x_{15}
,x_{13}x_{14}
,x_{12}x_{14}
,x_{11}x_{14}
,x_{10}x_{14},\linebreak
x_{9}x_{14}
,x_{8}x_{14}
,x_{10}x_{13}
,x_{9}x_{13}
,x_{8}x_{13}
,x_{6}x_{13}
,x_{3}x_{13}
,x_{1}x_{13}
,x_{10}x_{12}
,x_{9}x_{12}
,x_{8}x_{12}
,x_{3}x_{12}
,x_{10}x_{11}
,x_{9}x_{11},\linebreak
x_{8}x_{11}
,x_{6}x_{10}
,x_{3}x_{10}
,x_{1}x_{10}
,x_{3}x_{9}
,x_{5}x_{7}
,x_{3}x_{7}
,x_{2}x_{7}
,x_{1}x_{7}
,x_{5}x_{6}
,x_{2}x_{6}
,x_{1}x_{6}
,x_{4}x_{5}
,x_{3}x_{5}
,x_{1}x_{4}, \linebreak
x_{4}x_{15}x_{16}
,x_{2}x_{15}x_{16}
,x_{2}x_{4}x_{15}
,x_{6}x_{7}x_{14}
,x_{1}x_{5}x_{14}
,x_{4}x_{12}x_{13}
,x_{2}x_{12}x_{13}
,x_{2}x_{4}x_{12}
,x_{6}x_{7}x_{11}
,x_{1}x_{5}x_{11}, \linebreak
x_{4}x_{9}x_{10}
,x_{2}x_{9}x_{10}
,x_{2}x_{4}x_{9}
,x_{6}x_{7}x_{8}
,x_{1}x_{5}x_{8}) \subset S=K[x_1,\ldots,x_{16}]$.

According to \cite[Theorem 3.5]{duval}, we have that $$3=\sdepth(S/I)<\depth(S/I)=\dim(S/I)=4.$$ 
Since $S/I$ is Cohen-Macaulay, from Theorem \ref{t1}, we deduce that $\qdepth(S/I)=4$ and $\qdepth(I)\geq 5$. 
Indeed, we can compute, using CoCoA \cite{cocoa}, these invariants easily:
$$\alpha_0(S/I)=1,\;\alpha_1(S/I)=16,\;\alpha_2(S/I)=71,\;\alpha_3(S/I)=98,\;\alpha_4(S/I)=42,$$
and $\alpha_k(S/I)=0$ for $k\geq 4$. We obtain $\beta^4(S/I)=(1,12,29,0,0)$ and thus $\qdepth(S/I)=4$. 
Using the identity $\alpha_k(I)=\binom{16}{k}-\alpha_k(S/I)$, for all $0\leq k\leq 16$, we deduce that:
$$\alpha_0(I)=\alpha_1(I)=0,\;\alpha_2(I)=49,\;\alpha_3(I)=462,\;\alpha_4(I)=1778\text{ and }\alpha_k(I)=\binom{16}{k}\text{ for }k\geq 5.$$
Since $\beta_4^{10}(I)=1778-7\cdot 462+28\cdot 49 = -84<0$ it follows that $\qdepth(I)\leq 9$. By straightforward computations, we
obtain $\beta^{9}(I)=(0,0,49,119,35,693,791,1745,3003,5005)$ and thus $\qdepth(I)=9$.
\end{exm}

\section{Relative simplicial complexes and Hilbert depth}

First, we recall some basic definitions and facts regarding relative simplicial complexes; see \cite{adi} and \cite{stanley}
for further details.

A \emph{relative simplicial complex} $\Psi$ is a pair $\Psi=(\Delta,\Gamma)$, where $\Gamma\subset \Delta \subset 2^{[n]}$ are two 
simplicial complexes.

 A face $F$ of $\Psi$ is a subset $F\in \Delta\setminus \Gamma$. The dimension of $F$ is $\dim(F)=|F|-1$ and
      the dimension of $\Psi$ is the maximal dimension of a face of $\Psi$.

 If $I=I_{\Delta}$ and $J=I_{\Gamma}$ are the Stanley-Reisner ideals associated to $\Delta$, respectively $\Gamma$, then 
      $K[\Psi]:=J/I$ is called the Stanley-Reisner module associated to $\Psi$. 
			Conversely, any quotient of two squarefree monomial ideals can be regarded as the Stanley-Reisner module associated to
			a relative simplicial complex.

The $f$-vector of $\Psi$ is $f(\Psi)=f=(f_{-1},f_0,\ldots,f_{d-1})$, where $f_i$ is the number of faces of dimension $i$ of $\Psi$.
It is clear that $f_i=f_i(\Delta)-f_i(\Gamma)$ for all $-1\leq i\leq d-1$.

The $h$-vector of $\Psi$ is $h(\Psi)=h=(h_0,h_1,\ldots,h_d)$, where $h_k=\sum_{j=0}^{k}(-1)^{k-j}\binom{d-j}{k-j}f_{j-1}$ for all $0\leq k\leq d$.

As in the case of simplicial complexes, we have that
\begin{equation}\label{alfadd}
\alpha_j(J/I)=f_{j-1}\text{ for all }0\leq j\leq d\text{ and }\alpha_j(J/I)=0\text{ for }j>d,
\end{equation}
\begin{equation}\label{betadd}
\beta_k^d(J/I)=h_k\text{ for all }0\leq k\leq d\text{ and }\beta_k^{d'}(J/I)=h_k(\Psi_{\leq d'-1})\text{ for all }0\leq k\leq d'\leq d,    
\end{equation}
where $\Psi_{\leq d'-1}=(\Delta_{\leq d'-1},\Gamma_{\leq d'-1})$ is the $(d'-1)$-skeleton of $\Psi$.

Given a squarefree monomial ideal $I$ of $S$, we denote by $\Delta(I)$, the Stanley-Reisner simplicial complex associated to $I$.

Also, given a monomial $u\in S$, its support is $\supp(u)=\{x_j\;:\;x_j\mid u\}$.

\begin{lema}\label{dimrel}
Let $0\subset I\subsetneq J\subset S$ be two squarefree monomial ideals. We consider the relative simplicial complex 
$\Psi:=(\Delta(I),\Delta(J))$. We have that:
\begin{enumerate}
\item[(1)] $\dim(\Psi)=\dim(\Delta(I:J))$.
\item[(2)] $\dim(J/I)=\dim(\Psi)+1$.
\end{enumerate}
\end{lema}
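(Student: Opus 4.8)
The plan is to reduce both statements to the identity $\Ann_S(J/I)=(I:J)$ together with Lemma~\ref{dimc}. For part (2), I would first record that $(I:J)$ is a \emph{proper squarefree} monomial ideal: it is a finite intersection of colon ideals, hence radical, and a radical monomial ideal is squarefree; and since $I\subsetneq J$ there is a monomial in $J\setminus I$, so $1\notin(I:J)$. Thus $\Delta(I:J)$ is well defined. Now $J/I$ is a finitely generated graded module, so $\dim(J/I)=\dim\bigl(S/\Ann_S(J/I)\bigr)=\dim\bigl(S/(I:J)\bigr)$, and Lemma~\ref{dimc} gives $\dim(J/I)=\dim(\Delta(I:J))+1$. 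Hence (2) is an immediate consequence of (1).

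For part (1) I would compare the two face sets directly. The faces of $\Psi=(\Delta(I),\Delta(J))$ are precisely the $F\subseteq[n]$ with $x_F\in J\setminus I$ (the elements of $P_{J/I}$), while the faces of $\Delta(I:J)$ are the $G\subseteq[n]$ with $x_GJ\not\subseteq I$. The single tool needed is that, $I$ being radical, a monomial $m$ lies in $I$ if and only if its squarefree part $\sqrt m$ does. Using this: if $F$ is a face of $\Psi$, then $x_F\in J$, so $x_F^2=x_F\cdot x_F\in x_FJ$, and $x_F^2\notin I$ because its squarefree part is $x_F\notin I$; hence $x_FJ\not\subseteq I$ and $F$ is a face of $\Delta(I:J)$. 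This gives $\dim\Psi\le\dim\Delta(I:J)$. For the reverse inequality, take a face $G$ of $\Delta(I:J)$ of maximal dimension; then $x_Gm\notin I$ for some monomial $m\in J$, and writing $m$ as a multiple of a (squarefree) minimal generator $x_U$ of $J$ we get $x_Gx_U\notin I$. Setting $H:=G\cup U$, the squarefree monomial $x_H=\sqrt{x_Gx_U}$ is not in $I$ and is divisible by $x_U\in J$, so $x_H\in J\setminus I$, i.e.\ $H$ is a face of $\Psi$ with $|H|\ge|G|$. Hence $\dim\Psi\ge\dim\Delta(I:J)$, and (1) follows.

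The argument is mostly bookkeeping; the points that deserve care are verifying that $(I:J)$ really is a proper squarefree monomial ideal (so that $\Delta(I:J)$ and Lemma~\ref{dimc} genuinely apply), and the repeated passage between a monomial and its squarefree part, which is exactly where the squarefreeness of $I$ and $J$ enters. One should also keep in mind that the faces of $\Psi$ do not form a simplicial complex in general, so the proof compares maximal face dimensions rather than claiming an inclusion of complexes; degenerate situations such as $J=S$ are covered by the same computation.
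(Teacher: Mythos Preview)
Your proof is correct. Part~(2) is handled exactly as in the paper, via $\Ann_S(J/I)=(I:J)$ and Lemma~\ref{dimc}. For part~(1) your route differs slightly from the paper's: the paper proves the stronger assertion that $\Psi$ and $\Delta(I:J)$ have the \emph{same facets} (showing that a maximal face of $\Psi$ lies in $\Delta(I:J)$ and vice versa), whereas you establish only the dimension equality by proving the one-sided containment $\Psi\subseteq\Delta(I:J)$ on all faces and then, for the reverse inequality, enlarging a maximal $G\in\Delta(I:J)$ to $H=G\cup U\in\Psi$. Your forward direction is a bit cleaner since it avoids any maximality hypothesis; the paper's version, on the other hand, yields the extra structural information that the facet sets coincide, which is not needed for the lemma but is pleasant to know. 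Your careful remark that $(I:J)$ is a proper squarefree monomial ideal is a detail the paper leaves implicit.
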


\begin{proof}
(1) It is enough to show that $\Psi$ and $\Delta(I:J)$ share the same facets. In other words, 
given a monomial $u\in S$, we have to prove that: 
\begin{enumerate}
\item[(i)] If $u\in J\setminus I$ is squarefree with $x_ju\in I$ for all
$x_j\notin\supp(u)$, then $u\notin (I:J)$.
\item[(ii)] If $u \notin (I:J)$ is squarefree with $x_ju\in (I:J)$ for all $x_j\notin\supp(u)$, 
            then $u\in J$.
\end{enumerate}
(i)  
Since $u\in J$ and $x_ju\in I$ for all $x_j\notin \supp(u)$, it follows that $(x_j\;:\;x_j\notin\supp(u))\subset (I:u)$.
If $v\in S$ is a monomial with $v\mid u$ such that $v\in (I:u)$, then $uv\in I$ and, moreover, since $I$ is squarefree, 
we have $u\in I$, a contradiction. Therefore, it follows that 
$$(I:J) \subset (I:u) = (x_j\;:\;x_j\notin \supp(u)).$$
Hence $u\notin (I:J)$, as required.

(ii) Since $u \notin (I:J)$, it follows that there exists a squarefree monomial $v\in J$ such that $uv\notin I$.
     We claim that $v\mid u$ and thus $u\in J$, as required. Indeed, if $v\nmid u$ then there exists $x_j\in\supp(v)\setminus\supp(u)$.
		 Since $x_juv\in I$ and $I$ is squarefree, it follows that $uv\in I$, a contradiction.
		
(2) From (1), it follows that
$$\dim(J/I)=\dim(S/\Ann(J/I))=\dim(S/(I:J))=\dim(\Delta(I:J))+1=\dim(\Psi)+1,$$
as required.
\end{proof}

\begin{teor}\label{t12}
Let $0\subset I\subsetneq J\subset S$ be two squarefree monomial ideal. Then:
\begin{enumerate}
\item[(1)] $\qdepth(J/I)\leq \dim(J/I)$.
\item[(2)] If $J/I$ is Cohen-Macaulay, then $\qdepth(J/I)=\dim(J/I)=\depth(J/I)$.
\end{enumerate}
\end{teor}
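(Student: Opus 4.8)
The plan is to mirror the proof of Theorem~\ref{t11}, now working with the relative simplicial complex $\Psi=(\Delta(I),\Delta(J))$, for which $K[\Psi]=J/I$, and invoking Lemma~\ref{dimrel} in place of Lemma~\ref{dimc}.

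For part (1), I would set $d-1=\dim(\Psi)$. Since $I\subsetneq J$, the relative complex $\Psi$ has at least one face, so $d\ge 1$, and Lemma~\ref{dimrel}(2) gives $\dim(J/I)=\dim(\Psi)+1=d$. By \eqref{alfadd} we have $\alpha_d(J/I)=f_{d-1}(\Psi)>0$ (there is a face of dimension $d-1$) while $\alpha_k(J/I)=0$ for $k>d$, hence $\max\{k\le n:\alpha_k(J/I)>0\}=d$. The right-hand inequality in \eqref{margini} then yields $\qdepth(J/I)\le d=\dim(J/I)$.

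For part (2), assume $J/I$ is Cohen-Macaulay, so $\depth(J/I)=\dim(J/I)=d$ by definition. By part (1) it suffices to prove $\qdepth(J/I)\ge d$, and by the characterization of $\qdepth$ from \cite[Theorem~2.4]{lucrare2} this amounts to checking $\beta_k^d(J/I)\ge 0$ for all $0\le k\le d$. Here is where Cohen-Macaulayness enters: since $K$ is infinite, $J/I$ admits a homogeneous system of parameters $\theta_1,\dots,\theta_d$ consisting of linear forms, and since $J/I$ is Cohen-Macaulay this is a $(J/I)$-regular sequence. Thus $N:=(J/I)/(\theta_1,\dots,\theta_d)(J/I)$ has Krull dimension $0$, so it is a finite-dimensional graded $K$-vector space, and the short exact sequences given by multiplication by the $\theta_i$ yield $H_N(t)=(1-t)^d H_{J/I}(t)$. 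On the other hand, using $H_{J/I}(t)=\sum_{j}\alpha_j(J/I)\,t^j/(1-t)^j$ (and $\alpha_j(J/I)=0$ for $j>d$ by \eqref{alfadd}), one reads off that the coefficient of $t^k$ in $(1-t)^d H_{J/I}(t)$ is exactly $\beta_k^d(J/I)$ as defined in \eqref{betak}. Comparing, $\beta_k^d(J/I)=\dim_K N_k\ge 0$ for $0\le k\le d$. Therefore $\qdepth(J/I)\ge d$, and combining this with part (1) and $\depth(J/I)=\dim(J/I)$ gives $\qdepth(J/I)=\dim(J/I)=\depth(J/I)$.

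The one step requiring genuine care is the nonnegativity of $\beta^d(J/I)=h(\Psi)$, i.e.\ that a Cohen-Macaulay relative complex has a nonnegative $h$-vector; the regular-sequence computation above keeps the argument self-contained, but it could equally well be quoted from the theory of Cohen-Macaulay relative simplicial complexes (cf.\ \cite{adi,stanley}). Alternatively, part (2) can bypass the $\beta$-formalism entirely: since $(1-t)^d H_{J/I}(t)=H_N(t)$ has nonnegative coefficients, Uliczka's criterion \eqref{ulita} directly gives $\qdepth(J/I)\ge d$. Everything else is a routine transcription of the arguments in Section~2.
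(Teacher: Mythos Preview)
Your proof is correct and follows the same architecture as the paper's: part~(1) is identical (Lemma~\ref{dimrel} plus \eqref{alfadd} and \eqref{margini}), and part~(2) hinges on the nonnegativity of $h(\Psi)=\beta^d(J/I)$ for a Cohen--Macaulay relative complex. The only difference is that the paper obtains this nonnegativity by citing \cite[Proposition~5.1]{stanr}, whereas you prove it inline via a linear regular sequence and the identity $(1-t)^dH_{J/I}(t)=H_N(t)$; your version (and the Uliczka-criterion shortcut you mention) is more self-contained but mathematically equivalent.
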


\begin{proof}
We may assume that $I=I_{\Gamma}$ and $J=I_{\Delta}$ with $\Gamma \subset \Delta \subset 2^{[n]}$.
Let $\Psi:=(\Delta,\Gamma)$.

(1) According to Lemma \ref{dimrel}, we have that
$d:=\dim(J/I)=\dim(\Psi)+1.$

On the other hand, from \eqref{margini} and \eqref{alfadd} it follows that
$$\qdepth(J/I) \leq \max\{k\;:\;\alpha_k(J/I)>0\}=d.$$
Hence, we get the required inequality.

(2) Assume $J/I$ is Cohen-Macaulay of dimension $d$. According to \cite[Proposition 5.1]{stanr}, 
    we have that $h_k(\Psi)\geq 0$ for all $0\leq k\leq d$.
    Therefore, from \eqref{betadd}, it follows that $\qdepth(J/I)\geq d$. 
		The conclusion follows from (1) and the definition of a Cohen-Macaulay modules.
\end{proof}

We have the following generalization of Lemma \ref{minune}:

\begin{lema}\label{minune_mare}
Let $\Psi=(\Delta,\Gamma)$ be a relative simplicial complex, $I=I_{\Delta}$, $J=I_{\Gamma}$. Then:
$$\depth(J/I)=\max\{d'\;:\; \Psi_{\leq d'-1}\text{ is Cohen Macaulay }\}.$$
\end{lema}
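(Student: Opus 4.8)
The plan is to mimic the proof of Lemma \ref{minune} in the absolute case, replacing the simplicial complex by the relative pair $\Psi=(\Delta,\Gamma)$ and performing the induction on the number of faces of $\Psi$ (that is, on $|\Delta\setminus\Gamma|$). First I would dispose of the base case: if $\Psi$ itself is Cohen-Macaulay, then $\depth(J/I)=\dim(J/I)=\dim(\Psi)+1$ by Lemma \ref{dimrel}(2), and the right-hand side equals this same value since the top skeleton $\Psi_{\leq d-1}=\Psi$ is Cohen-Macaulay and no larger $d'$ is admissible (skeletons of higher index coincide with $\Psi$ but have dimension $d-1$, so $d'=\dim(\Psi)+1$ is forced). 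So both sides agree.

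For the inductive step, suppose $\Psi$ is not Cohen-Macaulay. Pick a face $F$ of $\Psi$ of maximal dimension, i.e. $F\in\Delta\setminus\Gamma$ with $|F|-1=\dim(\Psi)$, and split off the facet: set $\Delta_1=\Delta\setminus\{F\}$ (still a simplicial complex, since $F$ is maximal in $\Delta$ — note $F$ need not be a facet of $\Delta$, so here one should be careful and instead choose $F$ to be a facet of $\Delta$ that lies outside $\Gamma$; such a facet exists because $\dim\Psi=\dim\Delta(I:J)$ and one can take a facet of $\Delta(I:J)$) and $\Delta_2=\langle F\rangle$. Then consider the Mayer–Vietoris-type short exact sequence of relative Stanley–Reisner modules
\begin{equation}\label{sesrel}
0\to K[(\Delta,\Gamma\cup\Delta_2)]\oplus\text{(correction terms)}\to K[\Psi_1]\oplus K[\Psi_2]\to K[\Psi_1\cap\Psi_2]\to 0,
\end{equation}
where $\Psi_1=(\Delta_1,\Gamma\cap\Delta_1)$ and $\Psi_2=(\Delta_2,\Gamma\cap\Delta_2)$; more cleanly, one uses that removing the single top-dimensional face $F$ from $J\setminus I$ corresponds to an exact sequence $0\to K[\langle F\rangle,\partial\langle F\rangle\text{-part}]\to J/I\to J'/I\to 0$ with $J'$ the ideal of $\Delta\setminus\{F\}$, and $K[\langle F\rangle,\ldots]\cong (x_F)$-type module of dimension $|F|$. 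Applying the Depth Lemma to this sequence, together with the induction hypothesis applied to $\Psi'=(\Delta\setminus\{F\},\Gamma)$, yields the formula for $\depth(J/I)$ in terms of the maximal Cohen-Macaulay skeleton, since removing a top face affects only the top skeleton and leaves $\Psi_{\leq d'-1}$ unchanged for $d'-1<\dim\Psi$.

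The main obstacle I anticipate is getting the correct short exact sequence in the relative setting and keeping track of which ambient complex $\Delta$ and which subcomplex $\Gamma$ the pieces live in — in the absolute case one splits $\Delta=\Delta_1\cup\langle F\rangle$ with $\Delta_1\cap\langle F\rangle=\partial\langle F\rangle$, but here $F$ may meet $\Gamma$ in a nontrivial subcomplex, so the "boundary" piece that gets glued is $(\partial F)\setminus\Gamma$ rather than all of $\partial F$, and one must check that the resulting $K$-vector-space decomposition is indeed a short exact sequence of $S$-modules. Once the sequence is set up correctly, the Depth Lemma and the induction go through verbatim as in Lemma \ref{minune}; I would therefore spend most of the write-up justifying the exact sequence and the identification of the dimensions of its terms (using Lemma \ref{dimrel}), and then simply invoke the Depth Lemma, leaving the bookkeeping of skeletons — which is routine — to the reader.
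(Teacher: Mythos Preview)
Your overall strategy---induction on $|\Delta\setminus\Gamma|$, peeling off a top-dimensional face $F$, and applying the Depth Lemma---is exactly the paper's approach. But your write-up is tangled in two places that the paper handles cleanly.

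First, your worry that a maximal-dimensional face $F\in\Delta\setminus\Gamma$ might fail to be a facet of $\Delta$ is unfounded: if $F\subsetneq G\in\Delta$ then $\dim G>\dim\Psi$ forces $G\in\Gamma$, whence $F\in\Gamma$ since $\Gamma$ is a simplicial complex, a contradiction. So $\Delta_1:=\Delta\setminus\{F\}$ is automatically a simplicial complex and no detour through $\Delta(I:J)$ is needed.

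Second, the exact sequence is far simpler than your Mayer--Vietoris attempt suggests. Since $\Gamma\subset\Delta_1\subset\Delta$, one has $I_\Delta\subset I_{\Delta_1}\subset I_\Gamma$, and the plain filtration sequence
\[
0\;\longrightarrow\; I_{\Delta_1}/I_\Delta \;\longrightarrow\; I_\Gamma/I_\Delta \;\longrightarrow\; I_\Gamma/I_{\Delta_1}\;\longrightarrow\; 0
\]
is $0\to K[\Psi_2]\to K[\Psi]\to K[\Psi_1]\to 0$ with $\Psi_1=(\Delta_1,\Gamma)$ and $\Psi_2=(\Delta,\Delta_1)$. There are no correction terms and no subtle interaction with $\Gamma$ to track: the relative complex $\Psi_2$ has the single face $F$, so $K[\Psi_2]\cong x_F\,K[x_i:i\in F]$ is Cohen--Macaulay of dimension $|F|=d$. (Your ``cleaner'' sequence has the right shape but the labels are swapped: the quotient is $J/J'=I_\Gamma/I_{\Delta_1}$, not $J'/I$.) The Depth Lemma then gives $\depth K[\Psi]=\depth K[\Psi_1]$ whenever $\depth K[\Psi]<d$, and since $(\Psi_1)_{\leq d'-1}=\Psi_{\leq d'-1}$ for all $d'<d$, induction finishes the argument exactly as you say.
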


\begin{proof}
According to Lemma \ref{dimrel}, we have $d=\dim(K[\Psi])=\dim(\Psi)+1$.
Note that, if $K[\Psi]=J/I$ is Cohen-Macaulay, then there is nothing to prove. 

We use induction on the number of faces of $\Psi$. 
If $|\Psi|=1$, then $\Psi=(\Delta,\Delta\setminus\{F\})$, where $F\in \Delta$ is a face of dimension $d-1$.
Assume that $F=\{1,2,\ldots,d\}$. Since $x_1x_2\ldots x_d$ is the only squarefree monomial in 
$I_{\Delta\setminus\{F})\setminus I_{\Delta}$, it is easy to see that 
$K[\Psi]\cong x_1x_2\ldots x_d K[x_1,\ldots,x_d]$ and thus $K[\Psi]$ is Cohen-Macaulay of dimension $d$.

Now, assume that $\Psi=(\Delta,\Gamma)$ and $r=\depth(K[\Psi])<d$. 
We choose $F\in \Delta\setminus \Gamma$ with $\dim(F)=\dim(\Psi)$.
We let $\Psi_1=(\Delta\setminus\{F\},\Gamma)$ and $\Psi_2=(\Delta,\Delta\setminus\{F\})$.
We have that 
\begin{equation}\label{kpsi}
K[\Psi]=J/I=I_{\Gamma}/I_{\Delta} \cong I_{\Gamma}/I_{\Delta\setminus\{F\}} \oplus
I_{\Delta\setminus\{F\}}/I_{\Delta} = K[\Psi_1]\oplus K[\Psi_2].
\end{equation}
Since $K[\Psi_2]$ is Cohen-Macaulay of dimension $d$, from \eqref{kpsi} and the induction hypothesis
it follows that
$$\depth K[\Psi]=\depth K[\Psi_1]=\max\{d'\;:\;(\Psi_1)_{\leq d'-1}\text{ is Cohen Macaulay }\}.$$
On the other hand, for $d'<d$ we have that $(\Psi_1)_{\leq d'-1}=\Psi_{\leq d'-1}$ and thus we are done.
\end{proof}

\begin{teor}\label{teo22}
For any squarefree monomial ideals $0\subset I\subsetneq J\subset S$, we have that 
$$\qdepth(J/I)\geq \depth(J/I).$$
\end{teor}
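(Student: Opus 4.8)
The plan is to mimic the proof of Theorem~\ref{t11}, replacing Lemma~\ref{minune} by its relative version Lemma~\ref{minune_mare} and the Cohen--Macaulay estimate of Theorem~\ref{cm} by the nonnegativity of the $h$-vector of a Cohen--Macaulay relative complex (\cite[Proposition 5.1]{stanr}), which is already invoked in the proof of Theorem~\ref{t12}(2). Concretely, I would first write $I=I_{\Delta}$ and $J=I_{\Gamma}$ with $\Gamma\subset\Delta\subset 2^{[n]}$, set $\Psi=(\Delta,\Gamma)$, and put $d'=\depth(J/I)$.

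Next, by Lemma~\ref{minune_mare}, the skeleton $\Psi_{\leq d'-1}$ is a Cohen--Macaulay relative simplicial complex. Its dimension is $d'-1$, so by \cite[Proposition 5.1]{stanr} we have $h_k(\Psi_{\leq d'-1})\geq 0$ for all $0\leq k\leq d'$. Now I would apply the second formula in \eqref{betadd}, which gives
$$\beta_k^{d'}(J/I)=h_k(\Psi_{\leq d'-1})\geq 0\text{ for all }0\leq k\leq d'.$$
By the combinatorial characterization of Hilbert depth recalled in the introduction (the formula from \cite[Theorem 2.4]{lucrare2}), the condition $\beta_k^{d'}(J/I)\geq 0$ for all $0\leq k\leq d'$ forces $\qdepth(J/I)\geq d'=\depth(J/I)$, which is exactly the desired inequality.

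The only point needing a small amount of care is the applicability of \eqref{betadd} to the skeleton: one must check that $\dim(\Psi_{\leq d'-1})=d'-1$ (equivalently that $\Psi_{\leq d'-1}$ is nonempty, i.e. $d'\geq 1$), and that passing from $\Psi$ to $\Psi_{\leq d'-1}$ is compatible with the Stanley--Reisner dictionary via Lemma~\ref{dimrel}. The case $d'=0$ is trivial since $\qdepth$ is always $\geq 0$. I do not expect a genuine obstacle here; the proof is essentially formal once Lemma~\ref{minune_mare} and \cite[Proposition 5.1]{stanr} are in hand, and the main content has already been done in proving those two ingredients.
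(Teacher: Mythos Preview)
Your proposal is correct and follows exactly the approach the paper intends: the paper's own proof simply says it is ``similar to the proof of Theorem~\ref{t11}, using \eqref{betadd} and Lemma~\ref{minune_mare},'' and you have spelled out precisely these details, including the use of \cite[Proposition~5.1]{stanr} (already invoked in Theorem~\ref{t12}(2)) as the relative analogue of Theorem~\ref{cm}(1). Your attention to the edge case $d'=0$ and to the dimension of the skeleton is appropriate care that the paper leaves implicit.
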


\begin{proof}
The proof is similar to the proof of Theorem \ref{t11}, using \eqref{betadd} and Lemma \ref{minune_mare}.
\end{proof}

\begin{exm}\rm
Let $S = K[x_1,\ldots , x_6]$, $I = (x_1x_4x_5, x_4x_6, x_2x_3x_6)$ and $J = (x_1x_2, x_1x_5,
x_1x_6, \linebreak x_2x_3, x_2x_4, x_4x_6)$. According to \cite[Remark 3.6]{duval}, the module $J/I$ is Cohen-Macaulay
of dimension $4$, while $\sdepth(J/I)=3$. From Theorem \ref{t12} it follows that $\qdepth(J/I)=4$, which can
be easily verified by straightforward computations.
\end{exm}

\subsection*{Acknowledgements}

The second author was supported by a grant of the Ministry of Research, Innovation and Digitization, CNCS - UEFISCDI, 
project number PN-III-P1-1.1-TE-2021-1633, within PNCDI III.






\end{document}